\documentclass[a4paper,12pt]{article}
\usepackage{times}
\textheight 25.25cm
\textwidth 16.6cm
\oddsidemargin -0.1in
\evensidemargin 0.in
\topmargin -1.8cm

\usepackage{amsmath,amssymb,amsthm,amsfonts,bm,cite,color,xcolor,float,soul,cite,color,graphicx}

\usepackage[none]{hyphenat}
\usepackage{mathdots}

\usepackage{tikz}
\usepackage{tikz-cd}

\providecommand{\udots}{\Udots}
\makeatletter
\DeclareRobustCommand{\Udots}{
  \vcenter{\offinterlineskip
    \halign{
      \hbox to .75em{##}\cr
      \hfil.\cr\noalign{\kern.15ex}
      \hfil.\hfil\cr\noalign{\kern.15ex}
      .\hfil\cr\noalign{\kern.25ex}}
  }
}

 \usepackage{enumitem}

\numberwithin{equation}{section}

\newtheorem{theorem}{Theorem}[section]
\newtheorem{proposition}[theorem]{Proposition}

\theoremstyle{definition}

\newtheorem{definition}[theorem]{Definition}

\usepackage[center]{caption}
\captionsetup[figure]{labelsep=period}

\usepackage{xcolor}
\definecolor{ForestGreen}{RGB}{34,139,34}

\usepackage{hyperref} 
\usepackage{orcidlink}

\newcommand{\slog}{\mathop{\mathrm{slog}}}
\newcommand{\APS}{\mathop{\mathrm{APS}}}

\begin{document}

\begin{center}
{\LARGE \bf Graham's number stable digits: An exact solution}
\vspace{10mm}

{\Large \bf Marco Ripà $^{\orcidlink{0000-0002-6036-5541}}$}
\vspace{6mm}

\end{center}

\noindent {\bf Abstract.} In the decimal numeral system, we prove that the well-known Graham's number, $G := \! ^{n}3$ (i.e., $3^{3^{\udots^{3}}}$  ($n$ times)), and any base $3$ tetration whose hyperexponent is larger than $n$ share the same $\slog_3(G) - 1$ rightmost digits (where $\slog$ indicates the integer super-logarithm). This is an exact result since the $\slog_3(G)$-th rightmost digit of $G$ differs from the $\slog_3(G)$-th rightmost digit of $^{n+1}3$.
Furthermore, we show that the $\slog_3(^{n}3)$-th least significant digit of the difference between Graham's number and any base $3$ tetration whose integer hyperexponent exceeds $n$ is $4$. \\
\vspace{6mm}

\section{Introduction}

Graham's number, $G := g_{64}$, is a large number introduced by Ronald Graham and communicated to Martin Gardner in the late 1970s~\cite{gardner-1977}. In that context, for the sake of simplicity, Graham replaced the actual upper bound of a very hard open problem in the field of Ramsey theory, proved in a 1971 paper written by himself and Bruce Lee Rothschild \cite{98}, with an easier one (which was subsequently cited by the 1980 edition of the ``Guinness Book of World Records'' \cite{99}).

In detail, assuming that each digit of Graham's number occupies one Planck length cubed (i.e., a Planck volume), the observable universe is far too small to contain an ordinary digital representation of $G$. Nevertheless, we can iteratively define Graham's number in a few lines \cite{10}, providing a finite upper bound to the problem of finding the smallest dimension of a hypercube such that every two-colouring of its edges contains a monochromatic complete subgraph of four vertices forming a square~\cite{exoo-2003}.

By construction, Graham's number is an iterated base $3$ hexation (where integer hexation is an iterated pentation, which, in turn, is an iterated tetration, which, in turn, is an iterated exponentiation, and so forth).

In terms of Knuth's up-arrow notation (where 
$a\,\uparrow\,b$ means $a^b$, $a\,\uparrow\,\uparrow\,b$ denotes a power tower of $b$ copies of $a$, and each additional arrow indicates one higher level of iteration \cite{knuth-up-arrows}), we have that
\begin{equation}\label{eq1}
g_k:=\left\{\begin{array}{lll}
3 \uparrow \uparrow \uparrow \uparrow 3 \hspace{5.5mm} \textnormal{if} \hspace{3mm} k = 1 \\
3 \uparrow^{g_{k-1}} 3 \hspace{5mm} \textnormal{if} \hspace{3mm} k \geq 2
\end{array}\right.,
\end{equation}
and then we simply assign $k := 64$ to finally state that $G$ equals $g_{64}$ \cite{10, guy-selfridge-1973}.

By virtue of the fact that $g_{64} = 3 \uparrow \uparrow \uparrow \uparrow (3 \uparrow \uparrow \uparrow \uparrow ( \cdots \uparrow \uparrow \uparrow \uparrow 3)) = G$, we observe that Graham's number is just a very, very high base $3$ tetration, and this lets us trivially conclude that the most significant digit of $G$ is $1$ if we select the binary system, and the same is true if we are assuming the ternary system (since $G$ is an iterated power of $3$ by definition).

On the other hand, the problem of expressing the most significant digit of $G$ in the decimal number system is still open.

Accordingly, the present paper assumes the usual decimal number system, aiming to calculate the exact number of the rightmost (trailing) digits of $G$ that match the tail-end figures of the endless string
$\, \ldots 3967905496638003222348723967018485186439059104575627262464195387$
of frozen digits originated by the increasing height power tower $\underbrace{3^{3^{\udots{^3}}}}_{\text{$b$ times}}$ as $b$ keeps growing~\cite{94}.
\vspace{2mm}

For clarity, we will denote by $\mathbb{N}_0$ the set of nonnegative integers (including zero) and by $\mathbb{N}$ we will denote the set of positive integers $\{1,2,3,\ldots\}$. From here on, let us indicate the integer tetration as in the following Definition~\ref{def1}.

\begin{definition} \label{def1}
We define integer tetration $$a \uparrow \uparrow b := \begin{cases} a, & \textnormal{if~} b = 1  \\
a^{(^{b-1}a)},  & \textnormal{if~} b\geq 2
\end{cases}$$ as $^{b}a$ (where $a,b \in \mathbb{N}$) so that we can compactly write $^{b}3$ instead of $3 \uparrow \uparrow b$ (see also \cite{goebel-nederpelt-1971}), and consequently note that there exists a unique $n \in \mathbb{N}$ such that $^{n}3=G$.
\end{definition}

Thus, the goal of Section 2 is to determine the exact value of the positive integer $r$ such that $^{n+1}3 \equiv G \pmod{10^{r-1}}$ and $^{n+1}3 \not\equiv G \pmod{10^r}$, which is equivalent to asking to find, for any $c \in \mathbb{N}$, the value of $r$ such that $^{n+c}3 \equiv G \pmod{10^{r-1}}$ and $^{n+c}3 \not\equiv G \pmod{10^r}$ \cite{urroz-yebra-2009}.

In the following sections, we will denote by $\slog_3(G)$ the base $3$ super-logarithm of Graham's number (see Definition~\ref{def2}, and see also \cite[p.~68]{50} for the \textit{log star} notation of the \textit{iterated logarithm}) to prove that $r = \slog_3(G)$.

\sloppy Lastly, Section 3 shows that, for each positive integer $c$, the difference between the $\slog_3(G)$-th rightmost digit of $G$ and the corresponding digit of $^{\slog_3(G)+c}3$ is $6$ if the $\slog_3(G)$-th least significant digit of $G$ belongs to the set $\{6,7,8,9\}$, whereas the mentioned difference is equal to $-4$ if this does not occur.

In the end, for every $c \in \mathbb{N}$, $^{\slog_3(G)+c}3 - G \equiv 4 \cdot {10^{\slog_3(G)-1}} \pmod {10^{\slog_3(G)}}$, and so the $\slog_3(G)$-th rightmost digit of $3^G - G$ is $4$.

\section{The last \texorpdfstring{$\slog_3(G)-1$}{slog{3}(G)-1} digits of \texorpdfstring{$G$}{G}}

This section is devoted to proving that the rightmost digit of $G$ that is not equal to the corresponding digit of any base $3$ tetration whose (integer) hyperexponent is greater than $n$ is the $\slog_3(G)$-th, where the mentioned super-logarithm is defined as follows.

\begin{definition} \label{def2}
For positive integers $p$ and $q$, we define the integer super-logarithm $\slog_p(q)$ by $\slog_p(1)=0$ and $\slog_p(p^q) = \slog_p(q)+1$, and, in particular, whenever $\log_p(q)\in\mathbb{N}$ we also have $\slog_p(p^q) = \slog_p(\log_p(q))+2$.
\end{definition}

Consequently, the base $3$ super-logarithm of any positive integer $n$ is such that $\slog_3(^{n}3) = n$, since $\slog_p(1) = 0$ by Definition~\ref{def2}. Hence, $^{n}3 = G$ implies $n = \slog_3(G)$.

Although there is no general agreement on the definition of the super-logarithm for non-integers \cite{33}, this issue does not affect Definition~\ref{def2}, where we restrict to strictly positive integer arguments.

Now, by assuming the decimal number system as usual, we introduce the ``constancy of the congruence speed'', a peculiar property of every tetration base not a multiple of $10$ proved by the author in recent years \cite{88} (see also the related OEIS sequences A317905 and A373387).

\begin{definition} \label{def3}
Let $m \in\mathbb{N}_0$ and assume that $a\in\mathbb{N}-\{1\}$ is not a multiple of $10$. Then, given \linebreak  $^{b-1}a\equiv{^{b}a}\pmod {10^{m}}$ and $^{b-1}a \not\equiv{^{b}a}\pmod {10^{m+1}}$, for all $b \in \mathbb{N}$, $V(a,b)$ returns the nonnegative integer such that $^{b}a\equiv{^{b+1}a}\pmod {10^{m+V(a,b)}}$ and $^{b}a \not\equiv{^{b+1}a}\pmod {10^{m+V(a,b)+1}}$, and we define $V(a,b)$ as the \emph{congruence speed} of the base $a$ at the given height of its hyperexponent~$b$.
Furthermore, let $\bar{b} := \min_{b}\left\{b \in \mathbb{N} : V(a, b)=V(a, b+k) \hspace{1mm} \textnormal{for all} \hspace{1mm} k \in \mathbb{N}\right\}$ so that we define as \emph{constant congruence speed} of $a$ the positive integer $V(a) := V(a, \bar{b})$.
\end{definition}

For any $a, b \in \mathbb{N}-\{1\}$, the congruence speed of the tetration $^{b}a$ should always return the exact number of the rightmost digits of the result that freeze by going from $^{b}a$ to $^{b+1}a$ and that were not already stable digits at height $^{b-1}a$, but there is one (disputable) exception to the above-mentioned general correspondence: it is the case $a=5$ with $b=2$, as $^{1}5=5$, $^{2}5=3125$, and $^{3}5 \equiv \hspace{0mm} ^{2}5 \pmod{10^5}$ so that we could argue that the stated definition would imply $V(5,2)=4=V(5,3)$ even if $^{2}5$ is only a $4$-digit number, where the last one has been already frozen at height $1$ (i.e., \linebreak   $^{2}5 \equiv 5 \pmod{10^1}$); for a proof that this is the only possible collision that can occur, for every $a$ and $b$ greater than $1$, see [\url{https://math.stackexchange.com/questions/4863065/}].

Then, given that Equation~(16) of \cite{96} provides a formula to compute the constant congruence speed of every tetration base not a multiple of $10$, let us invoke it in the proof of Theorem~\ref{Theorem 2}.
More specifically, here we are only interested in line 11 of the mentioned Equation~(16), stating that the constant congruence speed of each tetration base, let us call it ${a}^{\star}$, which is congruent to $3$ modulo $20$ and such that $5$ is not equal to the absolute value of the difference between the rightmost digit of ${a}^{\star}$ that is different from the corresponding digit of $\ldots 9\mkern0.085mu 9\mkern0.085mu 8\mkern0.085mu 3\mkern0.085mu 4\mkern0.085mu 0\mkern0.085mu 3\mkern0.085mu 0\mkern0.085mu 9\mkern0.085mu 7\mkern0.085mu 0\mkern0.085mu 8\mkern0.085mu 9\mkern0.085mu 6\mkern0.085mu 5\mkern0.085mu 7\mkern0.085mu 9\mkern0.085mu 4\mkern0.085mu 8\mkern0.085mu 6\mkern0.085mu 6\mkern0.085mu 6\mkern0.085mu 5\mkern0.085mu 7\mkern0.085mu 7\mkern0.085mu 6\mkern0.085mu 1\mkern0.085mu 3\mkern0.085mu 8\mkern0.085mu 0\mkern0.085mu 2\mkern0.085mu 3\mkern0.085mu 5\mkern0.085mu 4\mkern0.085mu 4\mkern0.085mu 3\mkern0.085mu 1\mkern0.085mu 7\mkern0.085mu 6\mkern0.085mu 6\mkern0.085mu 2\mkern0.085mu 6\mkern0.085mu 6\mkern0.085mu 6\mkern0.085mu 8\mkern0.085mu 3\mkern0.085mu 0\mkern0.085mu 3\mkern0.085mu 6\mkern0.085mu 2\mkern0.085mu 9\mkern0.085mu 7\mkern0.085mu 2\mkern0.085mu 1\mkern0.085mu 8\mkern0.085mu 2\mkern0.085mu 8\mkern0.085mu 0\mkern0.085mu 3\mkern0.085mu 6\mkern0.085mu 4\mkern0.085mu 0\mkern0.085mu 4\mkern0.085mu 7\mkern0.085mu 6\mkern0.085mu 5\mkern0.085mu 8\mkern0.085mu 1\mkern0.085mu 9\mkern0.085mu 0\mkern0.085mu 7\mkern0.085mu 9\mkern0.085mu 2\mkern0.085mu 2\mkern0.085mu 9\mkern0.085mu 4\mkern0.085mu 3$ (i.e., $\left\{5^{2^n}\right\}_{\infty}-\left\{2^{5^n}\right\}_{\infty}$) and the mentioned key digit of $\ldots 362972182803640476581907922943$, is given by the $5$-adic valuation of $({a}^{\star})^2+1$ (and then ${a}^{\star} := \ldots 000003$ implies that the constant congruence speed of ${a}^{\star}$ is equal to the maximum number of times that $5$ divides $3^2+1$, so $V(3) = 1$ follows).

\begin{theorem} \label{Theorem 2}
Let $m$ and $n$ be positive integers such that $m > n$. In the decimal number system, $^{m}3 \equiv G\pmod{10^{n-1}}$ and $^{m}3 \not\equiv G \pmod{10^n}$ occurs if and only if $n = \textnormal{slog}_3(G)$.
\end{theorem}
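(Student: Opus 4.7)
The plan is to exploit the constant congruence speed of base $3$ tetration, which the excerpt has already computed to be $V(3) = v_5(3^2 + 1) = 1$. By Definition~\ref{def3} this means that once the hyperexponent passes a threshold $\bar{b}$, each increment $^{b}3 \rightsquigarrow {}^{b+1}3$ freezes exactly one new rightmost digit in radix-$10$. A direct calculation on the small towers $^{1}3 = 3$, $^{2}3 = 27$, $^{3}3 = 7625597484987$ gives $V(3,2) = 1$, so combined with the cited stability one has $V(3,b) = 1$ for every $b \geq 2$ (hence $\bar{b} = 2$), while $V(3,1) = 0$ at the boundary since $3$ and $27$ share no trailing digit.

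Next I would telescope. Letting $S(b)$ denote the largest integer $s$ with $^{b}3 \equiv {}^{b+1}3 \pmod{10^{s}}$, Definition~\ref{def3} reads $S(b) = S(b-1) + V(3,b)$ with the convention $S(0)=0$, so
\begin{equation*}
S(b) \;=\; \sum_{i=1}^{b} V(3,i) \;=\; b - 1 \qquad (b \geq 1).
\end{equation*}
In particular, writing $n := slog_3(G)$, Graham's number $G = {}^{n}3$ matches $^{n+1}3$ in exactly its last $n - 1$ digits and differs from it in the $n$-th.

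To upgrade this from $^{n+1}3$ to an arbitrary $^{m}3$ with $m > n$, I would chain the congruences: for every $j \geq n + 1$ we have $S(j) = j - 1 \geq n$, so $^{j}3 \equiv {}^{j+1}3 \pmod{10^{n}}$; transitivity then yields $^{n+1}3 \equiv {}^{m}3 \pmod{10^{n}}$. Combining with $G \equiv {}^{n+1}3 \pmod{10^{n-1}}$ and $G \not\equiv {}^{n+1}3 \pmod{10^{n}}$ delivers both stated congruence conditions, while the ``only if'' direction is immediate from the uniqueness of the maximal $r$ such that $^{m}3 \equiv G \pmod{10^{r}}$. The only genuine obstacle---already dispatched in the cited work on the constant congruence speed---is the uniform claim $V(3,b) = 1$ for \emph{every} $b \geq 2$ rather than merely for $b$ sufficiently large; everything after that is bookkeeping driven by the telescoping identity.
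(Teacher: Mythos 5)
Your proposal is correct and follows essentially the same route as the paper: establish $V(3,1)=0$ and $V(3,b)=1$ for all $b \geq 2$ by combining the cited constant-congruence-speed results (the sufficient condition $b \geq \tilde{\nu}(3)+2 = 3$ and the formula $V(3)=\nu_5(3^2+1)=1$) with a direct check of the small heights, then telescope to conclude that $^{n}3$ has exactly $n-1$ frozen digits. The only cosmetic differences are that you verify the base cases at heights $1$ and $2$ while the paper compares $^{3}3$ with $^{4}3$ modulo $10^3$, and that you spell out the chaining of congruences to arbitrary $m>n$ and the uniqueness argument for the ``only if'' direction, which the paper leaves implicit.
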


\begin{proof}
Theorem~\ref{Theorem 2} states that the rightmost digit of Graham's number that is non-stable (i.e., a digit that does not match the corresponding figure of the endless string $\ldots 5627262464195387$ of frozen digits returned by ${3^{3^{3^{\udots}}}}$) is the figure in position $\slog_3(G)$, by counting from right to left of~$G$.

To prove the theorem, we show that, in the decimal number system, the congruence speed of base $3$ tetration is $0$ if and only if the hyperexponent equals $1$, and that it becomes constant starting from height $3$ (since a sufficient condition for any base $a>1$ not a multiple of $10$ is $b \geq \tilde{\nu}(a)+2$, see \cite[Definition~2.1]{96}). For $a=3$ this implies $b \geq 3$, with $V(3,b)=V(3)=1$. Finally, by directly checking $b=2$, we conclude that $V(3,b)=1$ holds for all $b \geq 2$.

It is enough to derive that the total number of the rightmost stable digits of $^{n}3$ is $n - 1$.

In detail, from \cite[Definition~2.1]{96}, it follows that $\tilde{\nu}(3) = \nu_5 \left(3^{2}+1\right)$, where $\nu_5(\text{.})$ indicates the $5$-adic valuation of the argument. Page~449 of the mentioned paper gives $b \geq \tilde{\nu}(a) + 2$ as a sufficient (but not necessary) condition on $b:=b(a)$ for the constancy of the congruence speed of any tetration base $a$ whose last digit is $3$ or $7$.
Since $\nu_5 \left(a^{2}+1\right) + 2 = 3$, we only have to calculate the total number of the rightmost digits of $^{3}3$ that do not change as we move to~$^{4}3$. Then, it is not difficult to see that $^{3}3 \equiv 987 \pmod{10^3}$ and $^{4}3 \equiv 387 \pmod{10^3}$ so that $V(3,1) + V(3,2) + V(3,3) = 2$.

Now, Equation~(16), line~11, of \cite{96} implies that the base $3$ tetration is characterized by a unit constant congruence speed since $V(3)=\nu_5(3^2+1)=\nu_5(5 \cdot 2)=1$.

Hence, given the fact that $G =\hspace{0mm} ^{n}3 = g_{64}$ implies $n > 3$ (see Equation~\eqref{eq1}), the total number of stable digits of $G$ is equal to $2 + (n - 3) \cdot V(3) = 2 + (n - 3) \cdot 1 = n - 1$.

Thus, the least significant digit of $^{n}3$ that is non-stable is the one immediately to the left of the $(n-1)$-th rightmost digit of $^{n}3$.
Since $n = \slog_3(^{n}3) = \slog_3(G)$, we have finally shown that Graham's number has exactly $\slog_3(G) - 1$ frozen digits and the proof is complete.
\end{proof}

Lastly, let us note that we can shorten the proof of Theorem~\ref{Theorem 2} by invoking Lemma~1 of \cite{95} (see pages~246--247). Still, here we have opted for a more general approach not preliminarily restricted by the selection of the only tetration base taken into account by Lemma~1 of that 2020 paper.

\section{The asymptotic phase shift of \texorpdfstring{$3$}{3}}

Since Theorem~\ref{Theorem 2} implies that the $\slog_3(G)$-th rightmost digit of $G$ and the $\slog_3(G)$-th rightmost digit of $3^G$ cannot be equal, we can push our investigation even further by asking, for any given positive integer $c$, which is the $\slog_3(G)$-th rightmost figure of the difference between $^{\slog_3(G)}3$ (i.e., Graham's number) and $^{\slog_3(G)+c}3$ (e.g., if $c := 719$, then we are interested in the $\slog_3(G)$-th least significant digit of $G - ^{\slog_3(G)+719}3$\,).

Given the fact that the $\slog_3(G)$-th rightmost digit of $^{\slog_3(G)}3 - \hspace{0mm}^{\slog_3(G)+c}3$ does not depend on~$c$ (since we are assuming $c \in \mathbb{N}$ by hypothesis) and by observing that $^{\slog_3(G)}3 - \hspace{0mm}^{\slog_3(G)+c}3 < 0$, we will solve the problem above by showing that
\begin{equation}\label{eq2}
\frac{^{\slog_3(G)}3 - \hspace{0mm} ^{\slog_3(G)+1}3}{10^{\slog_3(G)-1}} \equiv 6 \hspace{-3mm} \pmod{10}.
\end{equation}

\begin{proposition}\label{Proposition 1}
Let $n$ and $m$ be two positive integers such that $m > n$. The congruence class modulo $10$ of the difference between the $n$-th rightmost digit of \hspace{1mm}$^{n}3$ and the $n$-th rightmost digit of \hspace{1mm}$^{m}3$ is $4$ or $6$. Furthermore, if $n$ is odd, then the $n$-th least significant digit of $\hspace{1mm}^{n}3 - \hspace{0mm}^{m}3$ is $4$, otherwise the $n$-th least significant digit of $\hspace{1mm}^{n}3 - \hspace{0mm}^{m}3$ is $6$.
\end{proposition}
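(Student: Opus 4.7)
\emph{Plan.} Set $T_k := {^{k}3}$ and $X_k := T_{k+1} - T_k > 0$. Theorem~\ref{Theorem 2}, together with $V(3) = 1$, gives $\nu_{10}(X_k) = k - 1$ for every $k \geq 2$, so the $k$-th rightmost digit $x_k$ of $X_k$ is nonzero. For $m > n$, the telescoping $^{m}3 - {^{n}3} = X_{m-1} + X_{m-2} + \cdots + X_n$ together with $\nu_{10}(X_j) \geq n$ for $j \geq n + 1$ forces $^{m}3 - {^{n}3} \equiv X_n \pmod{10^n}$. Reading the $n$-th rightmost digit of the negative integer $^{n}3 - {^{m}3}$ as the corresponding digit of its absolute value, that digit is precisely $x_n$, while the difference of the $n$-th digits of $^{n}3$ and $^{m}3$ reduces to $-x_n \pmod{10}$. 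Both claims of the proposition therefore reduce to showing $x_n = 4$ for odd $n$ and $x_n = 6$ for even $n$.

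I next exploit the factorization $X_k = 3^{T_{k-1}}(3^{X_{k-1}} - 1)$ together with the standard Lifting-the-Exponent identities $\nu_5(3^Y - 1) = \nu_5(Y) + 1$ (whenever $4 \mid Y$) and $\nu_2(3^Y - 1) = \nu_2(Y) + 2$ (whenever $Y$ is even). With the base case $\nu_5(X_2) = 1$, $\nu_2(X_2) = 5$ (read off from $X_2 = 7625597484960$), an immediate induction gives $\nu_5(X_k) = k - 1$ and $\nu_2(X_k) = 2k + 1$ for every $k \geq 2$. Since $\nu_2(X_k) - (k - 1) = k + 2 \geq 4$, the integer $X_k / 10^{k-1}$ is always a multiple of $4$, so $x_k \in \{2, 4, 6, 8\}$.

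To fix $x_k \pmod 5$ I refine the $5$-adic step. Starting from $3^4 = 1 + 80$, a short induction on $j$ via $(1 + 5^{j+1} u)^5 \equiv 1 + 5^{j+2} u \pmod{5^{j+3}}$ yields $3^{4 \cdot 5^j} = 1 + 5^{j+1} u_j$ with $u_j \equiv 1 \pmod 5$ for all $j \geq 0$. Writing $X_{k-1} = 4 \cdot 5^{k-2} s_{k-1}$ with $\gcd(s_{k-1}, 5) = 1$, the binomial expansion of $(1 + 5^{k-1} u_{k-2})^{s_{k-1}}$ gives $(3^{X_{k-1}} - 1)/5^{k-1} \equiv s_{k-1} \pmod 5$. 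Since $T_{k-1} \equiv 3 \pmod 4$ forces $3^{T_{k-1}} \equiv 2 \pmod 5$, the recursion $X_k / 5^{k-1} \equiv 3 \cdot X_{k-1} / 5^{k-2} \pmod 5$ emerges and solves to $X_k / 5^{k-1} \equiv 2 \cdot 3^{k-2} \pmod 5$. Writing $X_k = 2^{2k+1} 5^{k-1} R_k$ translates this into $R_k \equiv (-1)^k 3^{k-2} \pmod 5$, and a short calculation gives $x_k \equiv 2^{k+2} R_k \equiv (-1)^k \pmod 5$. Combining via CRT with the $2$-divisibility pins down $x_k = 6$ for even $k$ and $x_k = 4$ for odd $k$. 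The main obstacle is this refined $5$-adic bookkeeping: carefully tracking the correction terms in the binomial expansion and verifying that $u_j \equiv 1 \pmod 5$ propagates through the induction so as to seed the recursion for $X_k/5^{k-1} \pmod 5$ correctly.
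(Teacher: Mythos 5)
Your proof is correct, and it takes a genuinely different route from the paper. The paper establishes Proposition \ref{Proposition 1} through the ``asymptotic phase shift'' machinery: it asserts, with reference to the author's earlier work and to the numerical evidence of Figure \ref{fig:Figure_1} for heights $1$ to $50$, that the phase shift of the base $3$ depends only on the parity of the hyperexponent and that $\mathrm{APS}(3)=[4,6]$, and then reads off the digit maps $B_0\mapsto B_1$. You instead work directly with $X_k \coloneqq {}^{k+1}3-{}^{k}3 = 3^{T_{k-1}}\bigl(3^{X_{k-1}}-1\bigr)$ and use lifting-the-exponent to obtain $\nu_5(X_k)=k-1$ and $\nu_2(X_k)=2k+1$ (the hypotheses $4\mid X_{k-1}$ and $2\mid X_{k-1}$ are guaranteed by $\nu_2(X_{k-1})=2k-1\geq 3$), then refine the $5$-adic step to get the recursion $X_k/5^{k-1}\equiv 3\,X_{k-1}/5^{k-2}\pmod 5$ and hence $X_k/5^{k-1}\equiv 2\cdot 3^{k-2}\pmod 5$; combined with $\nu_2(X_k/10^{k-1})=k+2$ and the telescoping reduction ${}^{m}3-{}^{n}3\equiv X_n\pmod{10^n}$, this pins down $x_n=4$ for odd $n$ and $x_n=6$ for even $n$. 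I checked the base case $X_2=7625597484960$ (so $\nu_5=1$, $\nu_2=5$, $x_2=6$), the induction $(1+5^{j+1}u)^5\equiv 1+5^{j+2}u\pmod{5^{j+3}}$, the step $3^{T_{k-1}}\equiv 2\pmod 5$ from $T_{k-1}\equiv 3\pmod 4$, and the final CRT computation $x_k\equiv(-1)^k\pmod 5$ with $x_k$ even and nonzero; all are sound. What your approach buys is rigor and self-containedness: the parity-dependence of the phase shift of $3$, which the paper supports by citation to a monograph and by finitely many verified heights, becomes an actual theorem valid for all heights. What the paper's approach buys is generality, since the phase-shift framework applies uniformly to every tetration base not a multiple of $10$, whereas your LTE computation is tailored to the base $3$. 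The one point worth making explicit is the sign convention: the ``$n$-th least significant digit'' of the negative integer ${}^{n}3-{}^{m}3$ must be read as the corresponding digit of ${}^{m}3-{}^{n}3$, which you do flag and which matches the paper's own restatement in its Conclusion.
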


\begin{definition} \label{def4}
Let $(a,b)$ be a pair of positive integers and consider the decimal number system. For any given tetration base $a \neq 1$ not a multiple of $10$, we call \emph{phase shift} of $a$ at height $b$ the congruence class modulo $10$ of the difference between the rightmost non-stable digit of \hspace{0.5mm}$^{b}a$, say the $(\#S(a,b)+1)$-th by counting positions from right to left (see \cite[Equation~(1),~p.~442]{96}), and the $(\#S(a,b)+1)$-th rightmost digit of $^{b+1}a$.
\end{definition} 

For example, the phase shift of $5$ at height $4$ is $5$ since  $^{4}5 \equiv 68408203125 \pmod{10^{11}}$ and $^{5}5 \equiv 18408203125 \pmod{10^{11}}$ so that \footnotesize $\dfrac{68408203125 - 18408203125}{10^{11-1}}$ \normalsize $ \equiv 5 \pmod{10}$). 

In particular, if $\bar{b}$ (see Definition~\ref{def3}) indicates the smallest hyperexponent of $a$ such that its congruence speed is constant (i.e., $V(a,\bar{b}) = V(a)$), we call \emph{asymptotic phase shift} of $a$ ($\APS(a)$) the $4$-iteration cycle of the phase shifts generated by $\bar{b}$, $\bar{b}+1$, $\bar{b}+2$, and $\bar{b}+3$ (e.g., the asymptotic phase shift of $64$ is the cycle $[8,8,8,8]$ that can be halved twice and thus reduced to $[8]$ since $[8,8,8,8] \rightarrow [8,8] \rightarrow [8]$, while the asymptotic phase shift of $169$ is $[4,8,6,2]$ which cannot be further reduced since $[4,8]$ is different from $[6,2]$).

Let $b \in \mathbb{N}-\{1,2,\ldots,\bar{b}-1\}$. For any given $a$ greater than $1$ and not a multiple of $10$, a noteworthy property of the phase shift is that it only depends on the congruence modulo $4$ of $b$, which means that the phase shift of $a$ at height $b$ is equal to the phase shift of $a$ at height $b+4$, the phase shift of $a$ at height $b+1$ is the same as its phase shift at height $b+5$, and so forth.

In this regard, we note that if the asymptotic phase shift has four distinct elements, then the first entry plus the third one and the second entry plus the fourth one are both equal to $10$, while if the asymptotic phase shift consists of only two entries, their sum is also $10$. 
Moreover, for any given tetration base $a$ as above, all the entries of $\APS(a)$ are constrained to alternatively belong to only one of the following three sets, with no exception (see Appendix): $\{2,4,6,8\}$, $\{1,3,7,9\}$, $\{5\}$.

Consequently, although the asymptotic phase shift can have four distinct entries, it is sufficient to calculate only the phase shifts at heights $b$ and $b+1$ in order to fully map the phase shifts of the given tetration base at every height above $\bar{b} - 1$ (e.g., we can safely set $b := \tilde{\nu}(a)+2$, where $\tilde{\nu}(a)$ is stated by \cite[Definition~2.1,~p.~447]{96}).

In Chapters 3, 4, 6, and 7 of his 2011 monograph \cite{70}, the author analyzed the phase shifts of many integer tetration bases not a multiple of $10$ and the related $4$-iteration cycles (which can sometimes be halved once or twice, depending on $a$).

Here are various examples of the asymptotic phase shift for as many tetration bases that are coprime to $10$:
\begin{center}
\vspace{-0.5mm}
${\APS}(9) = [2]$; \\[1mm]
\vspace{-0.53mm}
${\APS}(11) = [4]$; \\[1mm]
\vspace{-0.53mm}
${\APS}(83) = [6]$; \\[1mm]
\vspace{-0.53mm}
${\APS}(53) = [8]$; \\[1mm]
\vspace{-0.53mm}
${\APS}(33) = [2,8]$, ${\APS}(39) = [8,2]$; \\[1mm]
\vspace{-0.53mm}
${\APS}(43) = [4,6]$, ${\APS}(41) = [6,4]$; \\[1mm]
\vspace{-0.53mm}
${\APS}(7) = [2,6,8,4]$, ${\APS}(73) = [4,2,6,8]$, ${\APS}(31) = [8,4,2,6]$, ${\APS}(29) = [6,8,4,2]$; \\[1mm]
\vspace{-0.53mm}
${\APS}(23) = [2,4,8,6]$, ${\APS}(13) = [6,2,4,8]$, ${\APS}(77) = [8,6,2,4]$, ${\APS}(19) = [4,8,6,2]$; \\[1mm]
\vspace{-0.53mm}
${\APS}(51) = [5]$; \\[1mm]
\vspace{-0.53mm}
${\APS}(101) = [9]$; \\[1mm]
\vspace{-0.53mm}
${\APS}(901) = [1,9]$; \\[1mm]
\vspace{-0.53mm}
${\APS}(301) = [3,9,7,1]$; \\[1mm]
\vspace{-0.53mm}
${\APS}(701) = [7,9,3,1]$.
\end{center}

In particular, if $a = 3$, the asymptotic phase shift is $[4,6,4,6] \rightarrow [4,6]$, and then we alternate between the phase shift $4$ and the phase shift $6$, over and over, according to the parity of $b$.
This modular recurrence is shown in Figure~\ref{fig:Figure1}, where the difference between each red digit and the corresponding dark digit, right below the red one, is $4$ or $-6$ so that $4 \equiv -6 \pmod{10}$ confirms the stated rule.

\begin{figure}[H]
\centering
\includegraphics[width=13cm]{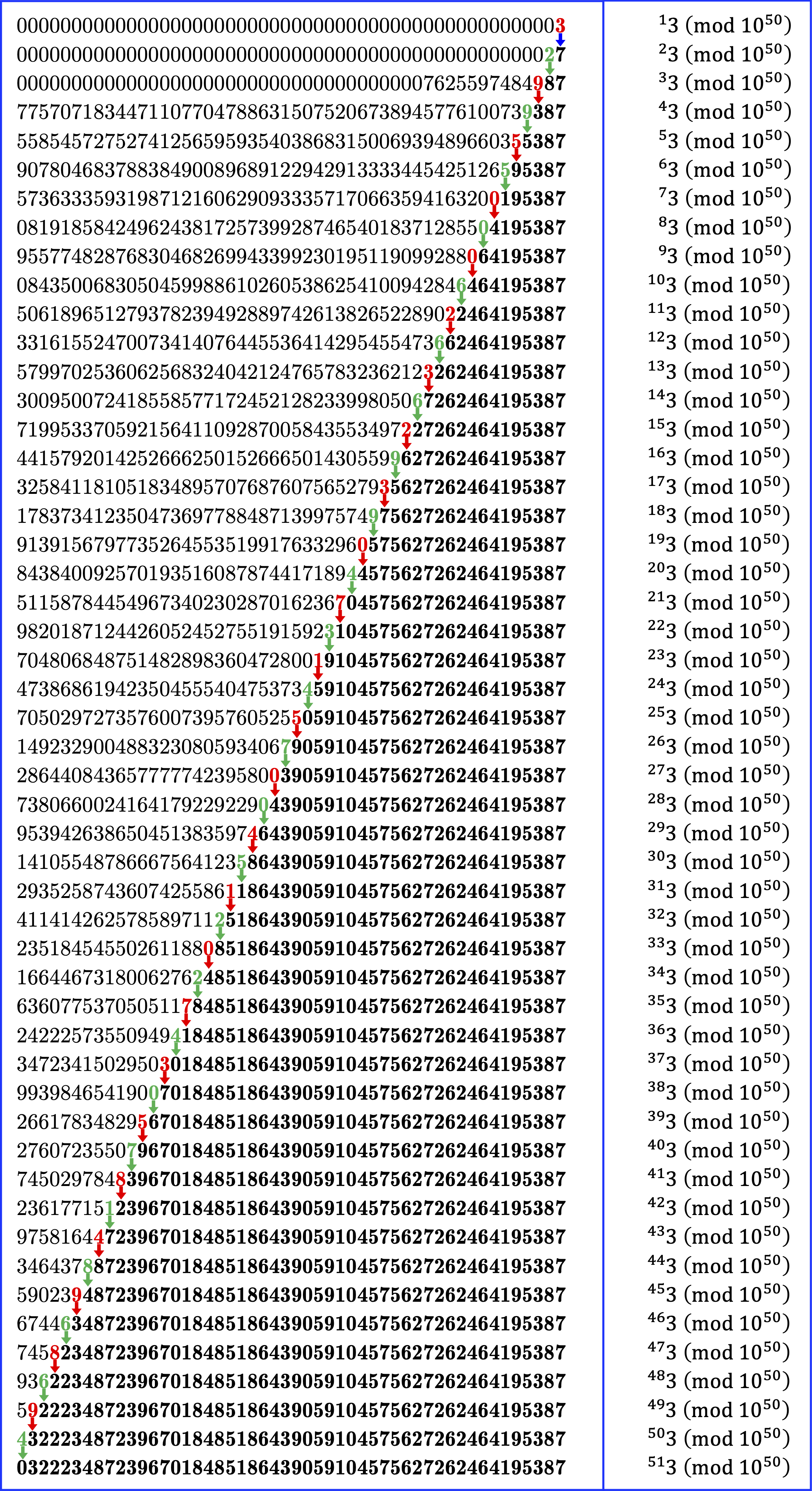}
\caption{Phase shifts of the tetration base $3$ at heights $1, 2, \ldots, 50$.\label{fig:Figure1}}

\end{figure}

Accordingly, we see that the phase shifts of the tetration base $3$ at heights $1$ to $50$ are fully described by the two congruence classes modulo $10$ of the following collection of pairs of differences: $({\color{red} {3-7}}$, $ {\color{ForestGreen}{2-8}}$; ${\color{red} {9-3}}$, $ {\color{ForestGreen} {9-5}}$; ${\color{red} {5-9}}$, $ {\color{ForestGreen} {5-1}}$; ${\color{red} {0-4}}$, $ {\color{ForestGreen} {0-6}}$; ${\color{red} {0-4}}$, $ {\color{ForestGreen} {6-2}}$; ${\color{red} {2-6}}$, $ {\color{ForestGreen} {6-2}}$; ${\color{red} {3-7}}$, $ {\color{ForestGreen} {6-2}}$; ${\color{red} {2-6}}$, $ {\color{ForestGreen} {9-5}}$; ${\color{red} {3-7}}$, $ {\color{ForestGreen} {9-5}}$; ${\color{red} {0-4}}$, ${\color{ForestGreen} {4-0}}$; ${\color{red} {7-1}}$, ${\color{ForestGreen} {3-9}}$; ${\color{red} {1-5}}$, ${\color{ForestGreen} {4-0}}$; ${\color{red} {5-9}}$, ${\color{ForestGreen} {7-3}}$; ${\color{red} {0-4}}$, ${\color{ForestGreen} {0-6}}$; ${\color{red} {4-8}}$, ${\color{ForestGreen} {5-1}}$; ${\color{red} {1-5}}$, ${\color{ForestGreen} {2-8}}$; ${\color{red} {0-4}}$, ${\color{ForestGreen} {2-8}}$; ${\color{red} {7-1}}$, ${\color{ForestGreen} {4-0}}$; ${\color{red} {3-7}}$, ${\color{ForestGreen} {0-6}}$; ${\color{red} {5-9}}$, ${\color{ForestGreen} {7-3}}$; ${\color{red} {8-2}}$, ${\color{ForestGreen} {1-7}}$; ${\color{red} {4-8}}$, ${\color{ForestGreen} {8-4}}$; ${\color{red} {9-3}}$, ${\color{ForestGreen} {6-2}}$; ${\color{red} {8-2}}$, ${\color{ForestGreen} {6-2}}$; ${\color{red} {9-3}}$, ${\color{ForestGreen} {4-0}}$; $\ldots)$.

Hence, we have $({\color{red} -4}$, $ {\color{ForestGreen} -6}$; ${\color{red} 6}$, ${\color{ForestGreen} 4}$; ${\color{red} -4}$, ${\color{ForestGreen} 4}$; ${\color{red} -4}$, $ {\color{ForestGreen} -6}$; ${\color{red} -4}$, $ {\color{ForestGreen} 4}$; ${\color{red} -4}$, ${\color{ForestGreen} 4}$; ${\color{red} -4}$, ${\color{ForestGreen} 4}$; ${\color{red} -4}$, ${\color{ForestGreen} 4}$; ${\color{red} -4}$, $ {\color{ForestGreen}4}$; ${\color{red} -4}$, $ {\color{ForestGreen}4}$; ${\color{red} 6}$, ${\color{ForestGreen}-6}$; ${\color{red} -4}$, ${\color{ForestGreen}4}$; ${\color{red} -4}$, $ {\color{ForestGreen}4}$; ${\color{red} -4}$, ${\color{ForestGreen}-6}$; ${\color{red} -4}$, ${\color{ForestGreen}4}$; ${\color{red} -4}$, ${\color{ForestGreen}-6}$; ${\color{red} -4}$, ${\color{ForestGreen}-6}$; ${\color{red} 6}$, ${\color{ForestGreen}4}$; ${\color{red} -4}$, ${\color{ForestGreen}-6}$; ${\color{red} -4}$, ${\color{ForestGreen}4}$; ${\color{red} 6}$, ${\color{ForestGreen}-6}$; ${\color{red} -4}$, ${\color{ForestGreen}4}$; ${\color{red} 6}$, ${\color{ForestGreen}4}$; ${\color{red} 6}$, ${\color{ForestGreen}4}$; ${\color{red} 6}$, ${\color{ForestGreen}4}$; $\ldots)$, which can be split into the odd-hyperexponent difference collection (red entries) $({\color{red} -4}$, ${\color{red} 6}$, ${\color{red} -4}$, ${\color{red} -4}$, ${\color{red} -4}$, ${\color{red} -4}$, ${\color{red} -4}$, ${\color{red} -4}$, ${\color{red} -4}$, $ {\color{red} -4}$, ${\color{red} 6}$, $ {\color{red} -4}$, ${\color{red} -4}$, $ {\color{red} -4}$, ${\color{red} -4}$, $ {\color{red} -4}$, ${\color{red} -4}$, $ {\color{red} 6}$, ${\color{red} -4}$, $ {\color{red} -4}$, ${\color{red} 6}$, $ {\color{red} -4}$, ${\color{red} 6}$, $ {\color{red} 6}$, ${\color{red} 6}$, $ \ldots)$ and the even-hyperexponent difference collection (green entries) $\hspace{1mm}({\color{ForestGreen} -6}$, ${\color{ForestGreen} 4}$, ${\color{ForestGreen} 4}$, ${\color{ForestGreen} -6}$, ${\color{ForestGreen} 4}$, ${\color{ForestGreen} 4}$, ${\color{ForestGreen} 4}$, ${\color{ForestGreen} 4}$, ${\color{ForestGreen} 4}$, ${\color{ForestGreen} 4}$, ${\color{ForestGreen} -6}$, ${\color{ForestGreen} 4}$, ${\color{ForestGreen} 4}$, ${\color{ForestGreen} -6}$, ${\color{ForestGreen} 4}$, ${\color{ForestGreen} -6}$, ${\color{ForestGreen} -6}$, ${\color{ForestGreen} 4}$, ${\color{ForestGreen} -6}$, ${\color{ForestGreen} 4}$, ${\color{ForestGreen} -6}$, ${\color{ForestGreen} 4}$, ${\color{ForestGreen} 4}$, ${\color{ForestGreen} 4}$, ${\color{ForestGreen} 4}$, $\ldots)$.

Thus, each red entry belongs to the congruence class $6$ modulo $10$ while all the green entries are congruent to $4$ modulo $10$.

For every positive integer $b$, we easily obtain the $b$-th rightmost digit of $^{b+1}3$ from the $b$-th rightmost digit of $^{b}3$ by knowing that the asymptotic phase shift of $3$ is $[4,6]$. In detail, for each positive odd integer $b$, $4$ plus the $b$-th least significant digit of \hspace{0.0mm}$^{b}3$ is congruent modulo $10$ to the $b$-th least significant digit of \hspace{0.0mm}$^{b+1}3$ (see Figure~\ref{fig:Figure1}). Conversely, for each positive even integer $b$, $6$~plus the $b$-th least significant digit of \hspace{0.5mm}$^{b}3$ is congruent modulo $10$ to the $b$-th least significant digit of \hspace{0.5mm}$^{b+1}3$.

Let us call $B_0$ the $b$-th rightmost digit of $^{b}3$ and let $B_1$ indicate the $b$-th rightmost digit of $^{b+1}3$ so that
\begin{itemize}[itemsep=1mm,topsep=1mm]
 \item if $b \equiv 1\pmod{2}$ and $B_0 \in \{0,1,2,3,4,5\}$, then $B_1 = B_0 + 4$;
 \item if $b \equiv 1\pmod{2}$ and $B_0 \in \{6,7,8,9\}$, then $B_1 = ((B_0 + 4) - 10)$ and thus $B_1 = B_0 - 6$;
 \item if $b \equiv 0\pmod{2}$ and $B_0 \in \{0,1,2,3\}$, then $B_1 = B_0 + 6$;
 \item if $b \equiv 0\pmod{2}$ and $B_0 \in \{4,5,6,7,8,9\}$, then $B_1 = ((B_0 + 6) - 10)$ and thus $B_1 = B_0 - 4$.
\end{itemize}

Thus, given $a := 3$, if $b \equiv 1 \pmod{2}$, then $0 \mapsto 4$, $1 \mapsto 5$,
 $2 \mapsto 6$, $3 \mapsto 7$, $4 \mapsto 8$, $5 \mapsto 9$, $6 \mapsto 0$, $7 \mapsto 1$, $8 \mapsto 2$, $9 \mapsto 3$, whereas if $b \equiv 0 \pmod{2}$, then $0 \mapsto 6$, $1 \mapsto 7$, $2 \mapsto 8$, $3 \mapsto 9$, $4 \mapsto 0$, $5 \mapsto 1$, $6 \mapsto 2$, $7 \mapsto 3$, $8 \mapsto 4$, $9 \mapsto 5$.

As a consequence, for any given $c \in \mathbb{N}$, the difference between the $\slog_3(G)$-th rightmost digit of Graham's number and the $\slog_3(G)$-th rightmost digit of $^{\slog_3(G)+c}3$ belongs to the set $\{-4, 6\}$, whereas the difference between the ($\slog_3(G)+1$)-th rightmost digit of $^{\slog_3(G)+1}3$ and the ($\slog_3(G)+1$)-th rightmost digit of $^{\slog_3(G)+1+c}3$ is necessarily equal to $-6$ or $4$.

Now, we observe that $\slog_3(G)$ is congruent to $1$ modulo $2$ since $G$ is an iterated base $3$ integer pentation and $3$ is congruent to $1$ modulo $2$, so we are interested only in the elements of the odd-hyperexponent phase shift set (which we have already verified to belong to the congruence class $6$ modulo $10$).

Therefore, for any given positive integer $c$, we can finally conclude that the $\slog_3(G)$-th least significant digit of $G - \hspace{0.0mm} ^{\slog_3(G)+c}3$ is $4$ (while the ($\slog_3(G)+1$)-th least significant digit of \linebreak $3^G -\hspace{0.0mm} ^{\slog_3(G)+1+c}3$ is $6$, indeed).

\section{Conclusion}

From the trivial observation that Graham's number, $G$, satisfies (by definition) $G = \! ^{\slog_3(G)}3$, we have shown that, in the decimal number system, the last $\slog_3(G) - 1$ digits of $G$ are the same as any power tower of the form $3^{3^{\udots{^3}}}$ which is at least equal to $^{\slog_3(G)}3$. Then, as the height of the mentioned power tower goes beyond $\slog_3(G)$, the difference between the $\slog_3(G)$-th rightmost digit of $G$ and the $\slog_3(G)$-th rightmost digit of $3^{3^{\udots{^3}}}$ becomes congruent to $6$ modulo $10$, and the stated congruence class does not change anymore as the height of $3^{3^{\udots{^3}}}$ continues to grow.

In conclusion, for any positive integer $c$, we can state that $4$ is the $\slog_3(G)$-th least significant digit of $^{\slog_3(G)+c}3 - G$ while $^{\slog_3(G)+c}3 - G \equiv 0 \pmod {10^{\slog_3(G)-1}}$.

\makeatletter
\renewcommand{\@biblabel}[1]{[#1]\hfill}
\makeatother

\section*{Appendix} \label{sec:Append}

The asymptotic phase shift is well defined only for tetration bases that are not a multiple of $10$ (Definition~\ref{def4}) since, for every $a$ ending with $0$, there is no hyperexponent $\bar{b} \in \mathbb{N}$ such that the congruence speed of $a$ is constant, and Equation~(2) of \cite{90} describes the very fast growth of the congruence speed of the (nonzero) tetration bases ending with $0$.

Nevertheless, let $10^c \mid a$ and $10^{c+1} \nmid a$, where $c$ is a strictly positive integer (as usual). Then, call $h(a)$ the rightmost nonzero digit of such tetration bases multiple of $10$ (e.g., $h(200050) = 5$ and $h(81743000) = 3$). We note that, for each hyperexponent $b \in \mathbb{N}-\{1,2\}$, the phase shift of $a$ at height $b$ is trivially equal to the least significant nonzero digit of $\left(\frac{a}{10^c}\right)^{(^{b-1}a)}$, and thus it is $1$ if and only if $h(a) \in \{1,3,7,9\}$, $6$ if and only if $h(a) \in \{2,4,6,8\}$, and $5$ if and only if $h(a) = 5$ (since $b > 2$ implies that ${^{b-1}}a$ is a multiple of $100$ and, consequently, is congruent to $0\pmod{4}$ so that, as long as $10^c \mid a$, the phase shift of $a$ at any height $b \geq 3$ equals the rightmost nonzero digit of $\left(\frac{a}{10^c}\right)^4$).

If the last digit of $a$ is $5$ (i.e., $5 \mid a$ and $2 \nmid a$), we observe that the asymptotic phase shift is always $[5]$ and, as we also assume that $a \neq 5$, the congruence speed of the mentioned tetration bases is certainly constant from height $3$ (see \cite[p.~448]{96}).

For every $a \in \mathbb{N}-\{1\}$ not a multiple of $10$, we conjecture that $\APS(a) \in \{[2],$ $[4],$ $[6],$ $[8],$ $[2,8],$ $[8,2],$ $[4,6],$ $[6,4],$ $[2,6,8,4],$ $[4,2,6,8],$ $[8,4,2,6],$ $[6,8,4,2],$ $[2,4,8,6],$ $[6,2,4,8],$ $[8,6,2,4],$ $[4,8,6,2],$ $[5],$ $[9],$ $[1,9],$ $[3,9,7,1],$ $[7,9,3,1] \}$ (and furthermore, if $a$ is also even, only the cases $[2],$ $[4],$ $[6],$ $[8],$ $[2,8],$ $[8,2],$ $[4,6],$ $[6,4],$ $[4,2,6,8],$ $[8,4,2,6],$ $[6,8,4,2],$ $[2,4,8,6],$ $[6,2,4,8],$ and $[4,8,6,2]$ can occur, given the fact that the congruence speed of every tetration base congruent to $2$, $4$, $6$, $8$ modulo $10$ becomes constant at most at height $3$, see \cite[Section~2.1]{96}).


\begin{thebibliography}{99}

\bibitem{50} Cormen, T. H., Leiserson, C. E., Rivest, R. L., \& Stein, C. (2022). \textit{Introduction to Algorithms}. (4th ed.). The MIT Press, Cambridge, Massachusetts.

\bibitem{exoo-2003} Exoo, G. (2003). A Euclidean Ramsey problem. \textit{Discrete \& Computational Geometry}, 29(2), 223--227. Available online at: \url{https://link.springer.com/article/10.1007/s00454-002-0780-5}.

\bibitem{gardner-1977} Gardner, M. (1977). Mathematical games. \textit{Scientific American}, 237(5), 18--28.

\bibitem{goebel-nederpelt-1971} Göbel, F., \& Nederpelt, R. P. (1971). The number of numerical outcomes of iterated powers. \textit{The American Mathematical Monthly}, 78(10), 1097--1103. Available online at: \url{https://eretrandre.org/rb/files/Goebel1971_243.pdf}.

\bibitem{98} Graham, R. L., \& Rothschild, B. L. (1971). Ramsey's Theorem for $n$-parameter sets. \textit{Transactions of the American Mathematical Society}, 159, 257--292.

\bibitem{guy-selfridge-1973} Guy, R. K., \& Selfridge, J. L. (1973). The nesting and roosting habits of the laddered parenthesis. \textit{The American Mathematical Monthly}, 80(8), 868--876. Available online at: \url{https://oeis.org/A003018/a003018.pdf}.

\bibitem{knuth-up-arrows} Knuth, D. E. (1976). Mathematics and computer science: Coping with finiteness. \textit{Science}, 194(4271), 1235--1242.

\bibitem{99} McWhirter, N. (Compiler). (1979). \textit{Guinness Book of World Records, 1980}. Sterling Publishing Company, New York.

\bibitem{94} OEIS Foundation Inc. (2024). A133613. \textit{The Online Encyclopedia of Integer Sequences}. Available online at: \url{https://oeis.org/A133613}.

\bibitem{70} Rip\`a, M. (2011).  \textit{La Strana Coda della Serie $n^{n^{\udots{^{^n}}}}$}. UNI Service, Trento, Italy.

\bibitem{95} Rip\`a, M. (2020). On the constant congruence speed of tetration. \textit{Notes on Number Theory and Discrete Mathematics}, 26(3), 245--260.

\bibitem{88} Rip\`a, M. (2021). The congruence speed formula. \textit{Notes on Number Theory and Discrete Mathematics}, 27(4), 43--61.

\bibitem{96} Rip\`a, M., \& Onnis, L. (2022). Number of stable digits of any integer tetration. \textit{Notes on Number Theory and Discrete Mathematics}, 28(3), 441--457.

\bibitem{90} Rip\`a, M. (2024). \textit{Congruence speed of the tetration bases ending with $0$}. arXiv:2402.07929v1 [math.NT]. Available online at: \url{https://arxiv.org/pdf/2402.07929}.

\bibitem{33} Surhone, L. M., Timpledon, M. T., \& Marseken, S. F. (2010). \textit{Super-Logarithm: Mathematics, Tetration, Exponentiation, Nth Root, Logarithm, Abel Function, Logistic Function, Fixed Point, Iterated Logarithm}. Betascript Publishing, Beau Bassin, Mauritius.

\bibitem{urroz-yebra-2009} Urroz, J. J., \& Yebra, J. L. A. (2009). On the equation $a^x \equiv x \pmod {b^n}$. \textit{Journal of Integer Sequences}, 12(8), Article 09.2.4, 1--8. Available online at: \url{https://cs.uwaterloo.ca/journals/JIS/VOL12/Yebra/yebra4.pdf}.

\bibitem{10} Weisstein, E. W. (2024). Graham's Number. \textit{MathWorld -- A Wolfram Web Resource}. Available online at:
\url{https://mathworld.wolfram.com/GrahamsNumber.html}.



\end{thebibliography}
\end{document}